\theoremstyle{plain}
\theoremstyle {plain}
\newtheorem {thm}{Theorem}[section]
\newtheorem {proposition}[thm]{Proposition}
\newtheorem {lemma}[thm]{Lemma}
\theoremstyle {definition}
\newtheorem {definition}[thm]{Definition}
\newtheorem {Algorithm}[thm]{Algorithm}
\newtheorem {remark}[thm]{Remark}
\newtheorem {ex}[thm]{Example}
\DeclareMathOperator{\Ann}{Ann}
\DeclareMathOperator{\Ass}{Ass}
\DeclareMathOperator{\minAss}{minAss}
\DeclareMathOperator{\Ext}{Ext}
\DeclareMathOperator{\Supp}{Supp}
\DeclareMathOperator{\codim}{codim}
\newcommand{\Q}{{\mathbb Q}}
\newcommand{\gen}[1]{\left\langle #1 \right\rangle}
\begin{document}

\title{On primary decomposition of modules}

\author{Nazeran Idrees$^1$}
\address{Nazeran Idrees\\ Department of Mathematics\\ GC University\\
Faisalabad\\ Pakistan}
\email{nazeranjawwad@gmail.com}

\author{Afshan Sadiq$^2$}
\address{Afshan Sadiq, \\
Department of Mathematics, \\
Jazan University, \\
P.O. Box 114, Jazan, Saudia Arabia.}
\email{afshansadiq6@gmail.com}

\author{Asifa Tassaddiq$^3$}
\address{Asifa Tassaddiq\\ Department of Mathematics\\ GC University\\
Faisalabad\\ Pakistan}
\email{asifashabbir@gmail.com}

\keywords{Gr\"obner bases, primary decomposition, localization}

\date{\today}

\maketitle
\begin{abstract}
Primary decomposition is a very important tool of commutative algebra and geometry. In this paper we generalized some of the existing algorithms of primary decomposition developed by Eisenbud et al. (cf. [EHV]) for free modules and also filled some gaps by providing proofs of important Theorems (\ref{thmprimary}, \ref{thmprimmon}, \ref{thmprimcomp}) appeared in [EHV]. All these algorithms are programmed and implemented in {\sc Singular}.
\end{abstract}

\section{Introduction }

Primary decomposition is a very well known and active area of research in computational algebra. Most of the algorithms for primary decomposition depend on Gr\"obner basis. The idea of Gianni, Trager and Zacharias (cf. [GTZ]) depend on reducing the ideal to zero dimensional case and applying linear coordinate change, and these techniques are generalised by E. W. Rutman (cf. [R]) for modules. The algorithm of Shimoyama and Yokoyama (cf. [SY]) introduces the concepts of pseudo primary decomposition and separating sets and these methods are further enhanced by Noro (cf. [N]) and are generalised for modules by Idrees (cf. [I]) . Many further developements are made in these main existing algorithms. Modular and parallelization techniques for these algorithms are discussed and implemented in SINGULAR by N. Idrees, G. Pfister and S. Steidel (cf. [IPS]). The idea of primary decomposition of Eisenbud, Huneke and Vasconcelos (cf. [EHV]) employs the methods of equidimensional decomposition of ideals and homological methods which are here generalised for modules partially and provided proofs of important theorems, these algorithms are implemented in SINGULAR (cf. DGPS ).
We will assume that all modules are finitely generated.

\begin{definition}
 {\em Let $N\subset M$ be
submodules of $R^s=\mathbb{Q}[X]^s$. We say that $N$ is a {\em
primary submodule\em} of $M$ if for $r\in R,v\in M$ and $rv\in N
\Rightarrow v\in N$ or $r\in\sqrt{\Ann(M/N)}$. In this case
$\Ann(M/N)$ is a primary ideal of R, and we say that N is {\em
$\sqrt{\Ann(M/N)}$-primary\em} in M.\em}
\end{definition}
 \begin{remark}
  Let $N\subset M$ be
$R$-modules. If $\emph{P}$ is a maximal ideal of $\emph{R}$ then $N$
is $\emph{P}$-primary in $M\,\Leftrightarrow \Ann(M/N)$ is a
$\emph{P}$-primary ideal in $R$.
\end{remark}
\begin{definition}
{\em A submodule $N $ of an $R$-module $M\subset R^s$
has a {\em primary decomposition\em} if $N=\cap_{i=1}^r Q_i$ with
each $Q_i$ a $P_i $-primary submodule of $M$ for some prime ideal
$P_i$ of R. $Q_i $ is called the {\em primary component\em} of $N$
belonging to $P_i$ and each $P_i$ is an {\em associated prime\em} of
$N$. If $Q_i$ does not contain $\cap_{j\neq i}Q_j$ and the $P_i$ are
all distinct then the decomposition is said to be {\em reduced\em}. As intersection of a finite number of $P$-primary submodules of M is
also $P$-primary, so we can always have a reduced primary
decomposition from a given primary decomposition. If $P_i $ is a
minimal prime ideal among the set of all associated primes of $N$,
then $P_i$ is called {\em isolated\em} prime associated to $N$;
otherwise $P_i$ is {\em embedded\em}. The set of all minimal
associated primes of $N$ is called $\minAss(N)$ and the set of all
associated primes is called $\Ass(N)$\em}.
\end{definition}
\begin{definition}
 The equidimensional hull of $0$
 in a module $M$ is defined to be the submodule $N$
 which contains all elements whose annihilators have dimension strictly less than the dimension of $M$. In other words $N$ is the intersection of all primary components of $0$ in $M$ having maximal dimension. We define the equidimensional hull of a submodule $M'\subset M$ to be the preimage in $M$ of the equidimensional hull of $0$ in $M/M'$.
\end{definition}
We write hull($N,M$) for the equidimensional hull , or simply by hull$N$, when there is no danger of confusion.
\section{Proofs and Algorithms}
\begin{thm}
Let $M$ be a module over regular domain $S$, and set $I_c=\Ann{\Ext_S}^c(M,S)$:
\begin{itemize}
 \item[1.] $I_c$ has codimension greater than or equal to $c$ and $M/(0:_MI_c)$ has no associated primes of codimension $c$. In particular a prime ideal of codimension $c$ is associated to $M$ if and only if $P$ contains the annihilator of ${\Ext_c}^S(M,S)$.
\item[2.] The equidimensional hull of $0$ in $M$ is the kernel of the natural map $$\pi: M\rightarrow {\Ext_S}^c({\Ext_S}^c(M,S),S)$$ where $c$ is the codimension of $M$.
    \end{itemize}
\end{thm}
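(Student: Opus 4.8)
\emph{Overall strategy.} The plan is to prove both parts by localizing at a prime $\mathfrak{p}$ --- where $S_{\mathfrak{p}}$ is a regular local ring of dimension $\codim\mathfrak{p}$ (I use here that $S$, being a polynomial ring over a field, or any equidimensional catenary regular domain, satisfies $\dim S_{\mathfrak{p}}=\codim\mathfrak{p}$) --- and reducing to modules of finite length over such rings. Since $M$ is finitely generated and $S$ Noetherian, $\Ext$, $(0:_M I_c)$, $\Ann$, primary decompositions, and the map $\pi$ of part~2 all commute with localization. The one external input is the following form of local duality (equivalently, self-duality of minimal free resolutions): if $(R,\mathfrak{m})$ is regular local of dimension $c$ and $L$ is a finitely generated $R$-module, then $\Ext^c_R(L,R)\neq 0\Leftrightarrow\operatorname{pd}_R L=c\Leftrightarrow\operatorname{depth}L=0\Leftrightarrow\mathfrak{m}\in\Ass L$ (the first two equivalences by Auslander--Buchsbaum), and in that case $\Ext^c_R(L,R)$ is the Matlis dual of $H^0_{\mathfrak{m}}(L)$; when $L$ has finite length one moreover has $\Ann_R\Ext^c_R(L,R)=\Ann_R L$, and the biduality map $L\to\Ext^c_R(\Ext^c_R(L,R),R)$ is an isomorphism.

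\emph{Part 1.} A prime $\mathfrak{p}$ contains $I_c$ iff $\Ext^c_{S_{\mathfrak{p}}}(M_{\mathfrak{p}},S_{\mathfrak{p}})=\Ext^c_S(M,S)_{\mathfrak{p}}\neq 0$, which forces $\operatorname{pd}_{S_{\mathfrak{p}}}M_{\mathfrak{p}}\geq c$, hence $\dim S_{\mathfrak{p}}\geq c$; thus $\codim I_c\geq c$. If in addition $\codim\mathfrak{p}=c$, the equivalences above (over $R=S_{\mathfrak{p}}$) read $\mathfrak{p}\supseteq I_c\Leftrightarrow\mathfrak{p}\in\Ass M$, which is the ``in particular''. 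For the remaining assertion, localize at a prime $\mathfrak{p}$ with $\codim\mathfrak{p}=c$ and set $R=S_{\mathfrak{p}}$, $N=M_{\mathfrak{p}}$, $N_0=H^0_{\mathfrak{m}}(N)$; since $\codim\mathfrak{p}=c=\codim M$, either $N=0$ or $\mathfrak{p}$ is a minimal prime of $M$ and $N$ has finite length over $R$. In either case $(I_c)_{\mathfrak{p}}=\Ann_R\Ext^c_R(N,R)=\Ann_R N_0$, which is $\mathfrak{m}$-primary (or the unit ideal). Hence any element of $N$ killed by $(I_c)_{\mathfrak{p}}$ is killed by a power of $\mathfrak{m}$, so $(0:_N(I_c)_{\mathfrak{p}})=N_0$ and $(M/(0:_M I_c))_{\mathfrak{p}}=N/N_0$ has positive depth, so $\mathfrak{p}S_{\mathfrak{p}}\notin\Ass(N/N_0)$. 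As $\mathfrak{p}$ ranged over all codimension-$c$ primes, $M/(0:_M I_c)$ has no associated prime of codimension $c$.

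\emph{Part 2.} First I make $\pi$ explicit. As $S$ is regular, $M$ has a finite free resolution, so the derived biduality $M\xrightarrow{\ \sim\ }\mathbf{R}\!\operatorname{Hom}_S(\mathbf{R}\!\operatorname{Hom}_S(M,S),S)$ is an isomorphism; since $\Ext^i_S(M,S)=0$ for $i<c=\codim M$, there is a canonical map $\Ext^c_S(M,S)[-c]=\tau_{\leq c}\mathbf{R}\!\operatorname{Hom}_S(M,S)\to\mathbf{R}\!\operatorname{Hom}_S(M,S)$, and applying $\mathbf{R}\!\operatorname{Hom}_S(-,S)$, composing with the biduality isomorphism, and taking $H^0$ yields the natural, localization-compatible $\pi\colon M\to\Ext^c_S(\Ext^c_S(M,S),S)$ --- which over a $c$-dimensional regular local ring, on a finite-length module, is the Matlis biduality map. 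Now let $H=\operatorname{hull}(0,M)$ with quotient $q\colon M\to\overline{M}=M/H$. Fixing a reduced primary decomposition of $0$ in $M$, one sees $H$ embeds into a finite direct sum of modules $M/Q_j$ with $\codim\mathfrak{p}_j>c$, so every associated prime of $H$ has codimension $>c$ and hence $\Ext^{c-1}_S(H,S)=\Ext^c_S(H,S)=0$; the long exact sequence of $0\to H\to M\to\overline{M}\to0$ then makes $\Ext^c_S(q,S)$, hence also $\Ext^c_S(\Ext^c_S(q,S),S)$, an isomorphism, and naturality of $\pi$ gives $\ker\pi=q^{-1}(\ker\pi_{\overline{M}})\supseteq\ker q=H$. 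Conversely, let $m\in\ker\pi$ and let $Q_i$ be a primary component of $0$ belonging to $\mathfrak{p}_i$ with $\codim\mathfrak{p}_i=c$; then $M_{\mathfrak{p}_i}$ has finite length over the $c$-dimensional regular local ring $S_{\mathfrak{p}_i}$, so $\pi_{\mathfrak{p}_i}$ is an isomorphism, whence the image of $m$ in $M_{\mathfrak{p}_i}$ is $0$. Since $Q_i$ is the preimage of $(Q_i)_{\mathfrak{p}_i}$ and $(Q_i)_{\mathfrak{p}_i}=0$ (localize the reduced decomposition at $\mathfrak{p}_i$), this gives $m\in Q_i$; intersecting over all such $Q_i$ yields $m\in H$. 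Hence $\ker\pi=H=\operatorname{hull}(0,M)$.

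\emph{Main obstacle.} I expect the delicate point to be $\pi$ itself: producing a construction with the correct functoriality, checking compatibility with localization, and confirming that on finite-length modules over regular local rings it is Matlis biduality --- everything in part~2 hinges on this. The second key input is the local-duality identification of the top $\Ext$ with a Matlis dual, together with the dimension bookkeeping (catenarity of $S_{\mathfrak{p}}$) that determines exactly when $M_{\mathfrak{p}}$ has finite length. The primary-decomposition steps --- that $\operatorname{hull}(0,M)$ has codimension $>c$, and that a codimension-$c$ primary component localizes to $0$ at its own prime --- are routine once a reduced decomposition is fixed.
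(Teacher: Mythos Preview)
The paper does not actually prove this theorem; it is quoted (from [EHV]) without proof and immediately followed by the algorithms it justifies, so there is no argument in the paper to compare yours against. That said, let me comment on your proposal.

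Part~2 is sound: the derived-biduality construction of $\pi$, the vanishing $\Ext^{c-1}_S(H,S)=\Ext^c_S(H,S)=0$ coming from $\codim H>c$, and the localization at a minimal associated prime $\mathfrak{p}_i$ (where $M_{\mathfrak{p}_i}$ genuinely has finite length because $\codim\mathfrak{p}_i=c=\codim M$) all work as you describe.

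Part~1 has a gap. The theorem is stated for an arbitrary $c$ (this is what makes Algorithm \textsc{RemComp} work), but in your argument for the assertion that $M/(0:_M I_c)$ has no associated primes of codimension $c$ you write ``since $\codim\mathfrak{p}=c=\codim M$, either $N=0$ or $\mathfrak{p}$ is a minimal prime of $M$ and $N$ has finite length''. The equality $c=\codim M$ is not assumed in Part~1, and for $c>\codim M$ the localization $N=M_{\mathfrak{p}}$ need not have finite length. The fix is small and in fact already implicit in what you wrote: local duality over $R=S_{\mathfrak{p}}$ identifies $\Ext^c_R(N,R)$ with the Matlis dual of $H^0_{\mathfrak{m}}(N)$ for \emph{any} finitely generated $N$, not just finite-length ones, and Matlis duality preserves annihilators; hence $(I_c)_{\mathfrak{p}}=\Ann_R H^0_{\mathfrak{m}}(N)$ is $\mathfrak{m}$-primary (or the unit ideal) regardless, and your computation $(0:_N(I_c)_{\mathfrak{p}})=H^0_{\mathfrak{m}}(N)$ goes through. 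You should state the local-duality input in this generality in your preamble rather than only for finite-length modules.
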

This theorem can be used to find the equidimensional hull of a module and also to remove the components of dimension less than any given number.
\\We give algorithms to remove component of low dimension and to find equidimensional hull of given module $M$.

\begin{Algorithm}
\textsc{RemComp(M)}\label{algRemCom}
\begin{algorithmic}
\REQUIRE Given a module $M$ over $S=K[x_1,x_2,...,x_n]$, and an integer $c$ (normally taken $\geq\dim M$).
\ENSURE A submodule $N_c$ consisting of the intersection of the primary components of $M$ of dimension greater than or equal to $c$.
\vspace{0.1cm}
\STATE $b:=\dim S$;
\STATE $N:=0$;
\WHILE{$b>c$}
\STATE compute $\Ext^b(M,S)$;
\IF{codim $\Ext^b(M,S)=b$}
\STATE $I_b:=\Ann(\Ext^b(M,S))$;
\STATE $N:=(N:_MI_b)$;
\STATE decrement $b$;
\STATE(Optional: $m:=M/N$);
\ENDIF
\ENDWHILE
\RETURN $N$;
\end{algorithmic}
\end{Algorithm}
\begin{Algorithm}
\textsc{EquidimHull(M)}\label{algEquidim}
\begin{algorithmic}
\REQUIRE Given a finitely generated module $M$ over $S=K[x_1,x_2,\ldots x_n]$.
\ENSURE The equidimensional  kernel $N\subset M$.
\vspace{0.1cm}
\STATE $c:=\codim M$;
\STATE compute $N := \ker (M\rightarrow {\Ext_S}^c({Ext_S}^c(M,S),S))$, the kernel of the canonical map;
\RETURN $N$;
\end{algorithmic}
\end{Algorithm}
In practice the canonical map is computed by forming the comparison map between the dual of a free resolution of $M$ and a free resolution of $\Ext^c_M(S/I,S)$.
An alternative would be to construct a polynomial subring $T$ of $S$ such that $\dim T = \dim N$ and over which $N$ is finitely generated (a Noether normalization for $S/\Ann(N)$ will do) and then take the kernel of the natural map of $N$ into its double dual over $T$.
The following algorithm will be very useful for the purposes of localization:
\begin{Algorithm}
\textsc{AssPrimCodimc(M)}\label{algAssprimcodim}
\begin{algorithmic}
\REQUIRE Given a finitely generated module $M$ over $S=K[x_1,x_2,\ldots,x_n]$ and an integer $c$.
\ENSURE Find an ideal whose associated primes are exactly the associated primes of $M$ having codimension $c$.
\vspace{0.1cm}
\STATE $I_c:=\Ann \Ext_S^c(M,S)$;
\IF{$(\codim I_c > c$)}
\STATE $I=S$;
\ELSE
\STATE $I$ := \textsc{EquidimHull}$(I_c)$;
\ENDIF
\RETURN $I$;
\end{algorithmic}
\end{Algorithm}
\begin{lemma} \label{lemPrim}
If $J$ is an ideal in noetherian ring $R$ and $M$ is a finitely generated $R$-module, then
\begin{itemize}
\item[1.] $(0:_M J^\infty)$ is the intersection of primary components of $0$ in $M$ whose associated primes do not contain $J$.
\item[2.] $\minAss M \cap \Supp JM \subset \Ass(0:_J M)\subset \Ass(M)\cap \Supp JM $
where $\minAss M$ is the set of minimal associated primes of $M$. Further given a primary decomposition of $0$ in $M$,
there is a primary decomposition of $(0:_MJ)$ for which each primary component contains the corresponding primary component
of $0$.
\item[3.] In particular if $I$ is a radical ideal then $(I:J)$ is radical and
      $$ (I:J)=\bigcap P_j$$
   where $P_j$ ranges over all primes containing $I$ but not containing $J$.
\end{itemize}
\end{lemma}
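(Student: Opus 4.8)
The plan is to reduce all three parts to one computation with a fixed primary decomposition $0 = Q_1\cap\dots\cap Q_r$ of the zero submodule of $M$, where $Q_i$ is $P_i$-primary (such a decomposition exists since $R$ is Noetherian and $M$ is finitely generated). The elementary fact used throughout is that, by the definition of a primary submodule, if $Q$ is $P$-primary in $M$ and $r\notin P$ then $r$ is a nonzerodivisor on $M/Q$, so $rN\subseteq Q$ forces $N\subseteq Q$ for every submodule $N\subseteq M$. I also use $\Supp(M/JM) = V(J)\cap\Supp M$ (equivalently $\sqrt{\Ann(M/JM)} = \sqrt{J+\Ann M}$), and read ``$\Supp JM$'' in part~2 as $\Supp(M/JM)$, since that is the set for which the stated containments hold.

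For part~1, put $S = \{\,i : J\not\subseteq P_i\,\}$; the claim is $(0:_MJ^\infty) = \bigcap_{i\in S}Q_i$. For ``$\subseteq$'': if $J^{n}m = 0$ and $i\in S$, choose $r\in J^{n}\setminus P_i$ (possible because $P_i$ is prime, so $J\not\subseteq P_i$ gives $J^{n}\not\subseteq P_i$); then $rm = 0\in Q_i$ with $r\notin P_i$ forces $m\in Q_i$. For ``$\supseteq$'': if $i\notin S$ then $J\subseteq P_i = \sqrt{\Ann(M/Q_i)}$, so $J^{n_i}M\subseteq Q_i$ for some $n_i$; with $n = \max_{i\notin S}n_i$, any $m\in\bigcap_{i\in S}Q_i$ satisfies $J^{n}m\in Q_i$ for every $i$ (for $i\notin S$ since $J^{n}M\subseteq Q_i$, for $i\in S$ since $m\in Q_i$), so $J^{n}m\in\bigcap_iQ_i = 0$. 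As $(0:_MJ^\infty)$ is intrinsic, this also shows $\bigcap_{i\in S}Q_i$ does not depend on the chosen decomposition.

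For part~2, the inclusion $(0:_MJ)\subseteq M$ gives $\Ass(0:_MJ)\subseteq\Ass M$; and if $P = \Ann_R(m)$ with $0\neq m\in(0:_MJ)$, then $Jm = 0$ and $\Ann(M)\,m = 0$ give $J + \Ann M\subseteq P$, that is $P\in V(J+\Ann M) = \Supp(M/JM)$, which is the right-hand containment. For the left-hand one, take $P\in\minAss M$ with $P\in\Supp(M/JM)$, so $J\subseteq P$ and $\Supp M_P = \{PR_P\}$, hence $\Ass M_P = \{PR_P\}$; pick $\bar m\in M_P$ with $\Ann_{R_P}(\bar m) = PR_P\supseteq J_P$, so $\bar m\in(0:_{M_P}J_P) = (0:_MJ)_P$ and therefore $P\in\Ass(0:_MJ)$. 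For the refinement of decompositions, intersect: $(0:_MJ) = \bigcap_i(Q_i:_MJ)$, and each $(Q_i:_MJ)\supseteq Q_i$ is either $M$ (when $JM\subseteq Q_i$) or $P_i$-primary. In the latter case the primary-submodule condition descends from $Q_i$ (if $rm\in(Q_i:_MJ)$ but $m\notin(Q_i:_MJ)$, pick $x\in J$ with $xm\notin Q_i$; then $r(xm)\in Q_i$, so $r\in P_i\subseteq\sqrt{\Ann(M/(Q_i:_MJ))}$), and $\Ann(M/(Q_i:_MJ)) = (Q_i:_RJM)\subseteq P_i$ because an element $r\notin P_i$ is a nonzerodivisor on $M/Q_i$, so $rJM\subseteq Q_i$ would force $JM\subseteq Q_i$. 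Discarding the components equal to $M$ yields the asserted primary decomposition of $(0:_MJ)$, each component containing the corresponding $Q_i$.

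For part~3, apply this with $M = R/I$: since $I$ is radical, $I = \bigcap_jP_j$ with the $P_j$ the finitely many minimal primes over $I$. One can also see it directly: $(I:_RJ) = \bigcap_j(P_j:_RJ)$, and $(P_j:_RJ) = R$ if $J\subseteq P_j$, while $(P_j:_RJ) = P_j$ if $J\not\subseteq P_j$ (choose $x\in J\setminus P_j$; then $rJ\subseteq P_j$ forces $rx\in P_j$, hence $r\in P_j$). Thus $(I:_RJ) = \bigcap_{J\not\subseteq P_j}P_j$ is an intersection of primes, hence radical, and it equals $\bigcap\{\,P : P\supseteq I,\ J\not\subseteq P\,\}$ since any prime over $I$ not containing $J$ contains one of these $P_j$. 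I expect the only genuinely nonroutine step to be the verification in part~2 that $(Q_i:_MJ)$ is $P_i$-\emph{primary} (that the radical of $\Ann(M/(Q_i:_MJ))$ is exactly $P_i$), the nonzerodivisor argument being what pins this down; the rest is unwinding definitions, modulo the caveat that ``$\Supp JM$'' must be read as $\Supp(M/JM) = V(J)\cap\Supp M$ (with the literal reading the right-hand containment in part~2 fails already for $R = \Q[x,y]$, $M = R/(x)\oplus R/(y)$, $J = (x)$).
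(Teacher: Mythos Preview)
The paper states this lemma without proof (it is quoted from [EHV]), so there is no argument in the paper to compare against. Your proof is correct and self-contained: part~1 is the standard saturation computation, part~2 combines the inclusion $\Ass(0:_MJ)\subseteq\Ass M$ with a localization argument at a minimal prime and the verification that each $(Q_i:_MJ)$ is either $M$ or $P_i$-primary, and part~3 is the specialization to $M=R/I$ with $I$ radical.

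Your remark about the notation is well taken: with the literal reading of $\Supp JM$ as the support of the submodule $JM$, the right-hand inclusion in part~2 is false (your example $R=\Q[x,y]$, $M=R/(x)\oplus R/(y)$, $J=(x)$ gives $(x)\in\Ass(0:_MJ)$ but $(x)\notin\Supp JM=V(y)$). Reading it as $\Supp(M/JM)=V(J)\cap\Supp M$, which is also what appears in [EHV], makes the statement correct, and your proof establishes exactly that version.
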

Here is an algorithm for finding the intersection of associated primes of module $M$ having a given dimension $c$.
\begin{Algorithm}
\textsc{InterAssPrim(M)}\label{algAssprimcodim}
\begin{algorithmic}
\REQUIRE A module $M$ and an integer $c$.
\ENSURE A set $H$ which is intersection of all associated primes of $M$  having dimension $c$.
\vspace{0.1cm}
\STATE $I_c:=\Ann \Ext_S^c(M,S)$;
\IF{$\codim I_c = c$}
\STATE $H$:= radical of eqidimensional hull of $I_c$;
\ELSE
\STATE $H := S$;
\ENDIF
\RETURN $H$;
\end{algorithmic}
\end{Algorithm}
\begin{definition}
Let $S$ be an affine ring and $J\subset S$ be an ideal, and $A\subset B$ be a finitely generated $S$ modules.
The localization of $A$ at the ideal $J$ is, denoted by $A_J$, is defined as
\\$A_J=\{b\in B\mid \dim(J+(A:b)) < \dim J\}$.
If $J$ is a prime ideal then we can write as
\\$A_J=\{b\in B\mid (A:b)\subsetneq J\}$.
If $J$ is a prime ideal , $A_J$ is the preimage of the usual localization $A_j\subset B_J$
under the canonical map $B\rightarrow B_J$. \\Of course the localization at $J$ is the same as the localization at the equidimensional hull of $J$.
\end{definition}
The following proposition is helpful in actual computation of localization of a module at an ideal.

\begin{proposition}
Let $J\subset S $ be an ideal in a noetherian ring, let $A\subset B$ be S-modules,
 and let $I_c'$ be the intersection of all the associated primes of $B/A$ having codimension exactly $c$. If we set $$K:=\bigcap_c(I_c',(I_c')_J)$$
 then, $$A_{[J]}=(A:K^{\infty}).$$
\end{proposition}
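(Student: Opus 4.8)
The plan is to fix a reduced primary decomposition of $A$ inside $B$ and to show that $A_{[J]}$ and $(A:K^{\infty})$ are the intersections of the same subfamily of primary components, the link between the two being the colon formulas of Lemma \ref{lemPrim}. So I would write $A=\bigcap_{i=1}^{r}Q_i$ with $Q_i$ a $P_i$-primary submodule of $B$, so that $\Ass(B/A)=\{P_1,\dots,P_r\}$, and call $P_i$ \emph{relevant} if $\dim(J+P_i)=\dim J$. Since $V(J+P_i)=V(J)\cap V(P_i)\subseteq V(J)$ one always has $\dim(J+P_i)\le\dim J$, and comparing $V(J)\cap V(P_i)$ with the top-dimensional components of $V(J)$ shows that $P_i$ is relevant precisely when it lies under some minimal prime of $J$ of maximal dimension; thus ``relevant'' is exactly the property left unchanged on passing to the equidimensional hull of $J$, in line with the remark preceding the proposition.

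The first real step is to describe the localization through the $Q_i$. For $b\in B$ one has $\sqrt{(A:_S b)}=\bigcap_{i:\,b\notin Q_i}P_i$, hence $\dim\!\big(J+(A:_S b)\big)=\max_{i:\,b\notin Q_i}\dim(J+P_i)$, so (using $\dim(J+P_i)\le\dim J$) we get $b\in A_{[J]}$ if and only if every component $Q_i$ that does not contain $b$ has $P_i$ not relevant; that is,
$$A_{[J]}=\bigcap_{i:\,P_i\ \text{relevant}}Q_i.$$
Running the same computation for the radical ideal $I'_c=\bigcap_{\codim P_i=c}P_i\subseteq S$ — a reduced primary decomposition whose minimal primes are exactly the $P_i$ of codimension $c$, which are pairwise incomparable because comparable primes have different heights — gives $(I'_c)_J=\bigcap\{P_i:\codim P_i=c,\ P_i\ \text{relevant}\}$.

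Next I would pin down $K$. Reading $(I'_c,(I'_c)_J)$ as the ideal quotient $\big(I'_c:(I'_c)_J\big)$ and invoking Lemma \ref{lemPrim}(3) together with the incomparability above, one sees that $\big(I'_c:(I'_c)_J\big)$ is radical with minimal primes exactly the \emph{non}-relevant $P_i$ of codimension $c$, hence equals $\bigcap\{P_i:\codim P_i=c,\ P_i\ \text{not relevant}\}$; intersecting over $c$ yields $K=\bigcap_{i:\,P_i\ \text{not relevant}}P_i$. Now Lemma \ref{lemPrim}(1), applied with $M=B/A$ and pulled back along $B\to B/A$, gives $(A:_B K^{\infty})=\bigcap\{Q_i:\ K\not\subseteq P_i\}$, so it only remains to check that $K\not\subseteq P_i\iff P_i\ \text{relevant}$. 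If $P_i$ is not relevant it is one of the primes intersected to form $K$, so $K\subseteq P_i$. If $P_i$ is relevant and $K\subseteq P_i$, primality forces $P_j\subseteq P_i$ for some non-relevant $P_j$; but then $V(J)\cap V(P_i)\subseteq V(J)\cap V(P_j)$, so $\dim(J+P_i)\le\dim(J+P_j)<\dim J$, contradicting relevance of $P_i$. Comparing the displayed description of $A_{[J]}$ with that of $(A:_B K^{\infty})$ completes the argument.

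The only genuinely load-bearing step is the identity $A_{[J]}=\bigcap_{i:\,P_i\ \text{relevant}}Q_i$, and I expect that is where the care is needed: it rests on $\sqrt{(A:_S b)}=\bigcap_{i:\,b\notin Q_i}P_i$ and on the elementary dimension bookkeeping $\dim(J+L)=\dim\big(V(J)\cap V(L)\big)$. The apparent difficulty that $(A:_B K^{\infty})$ requires a single uniform power of $K$ to kill all the relevant components at once is precisely what Lemma \ref{lemPrim}(1) already takes care of, so no extra effort is needed there; the one point to keep an eye on is that associated primes of equal codimension are honestly incomparable, since that is what makes the quotient $\big(I'_c:(I'_c)_J\big)$ peel off exactly the relevant primes of codimension $c$ and nothing more.
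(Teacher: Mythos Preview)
Your proof is correct and follows essentially the same route as the paper's: identify each $K_c=(I_c':(I_c')_J)$ as the intersection of the non-relevant associated primes of codimension $c$ via Lemma~\ref{lemPrim}, so that $K$ is the intersection of all non-relevant associated primes, and then saturate to strip exactly the corresponding primary components from $A$. The paper's proof is a two-sentence sketch citing parts (2) and (3) of the lemma, whereas you spell out the primary-decomposition description of $A_{[J]}$ explicitly and invoke parts (3) and (1), which is the more precise bookkeeping.
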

 \begin{proof}
By Lemma \ref{lemPrim} (2) the ideal $K_c:=(I_c',(I_c')_J)$ is the  intersection of those associated primes of $B/A$ having codimension $c$ and not contained in a prime containing $J$ and having the same dimension as $J$.
By Lemma \ref{lemPrim} (3) $(A:K^{\infty})$ is the result of removing all corresponding primary components from $A$, and is equal to $A_{[J]}$.
\end{proof}
\begin{Algorithm}
\textsc{Local(A,j)}\label{alglocal}
\begin{algorithmic}
\REQUIRE A module $A\subseteq B$ and an ideal $J$.
\ENSURE A module $A_J=\{b\in B\mid (A:b)\subsetneq J\}$, localization of $A$ at $J$.
\vspace{0.1cm}
\FOR{for each $c = \codim B/A,\ldots, n$}
\STATE compute $I_c'$ = intesection of all associated primes  of $B/A$ having codimension $e$;
\STATE $I_c'' := {I_c'}_{(J)}$;
\STATE $I_c''' := (I_c':I_c")$;
\STATE $K := \bigcap_c I_c'''$;
\ENDFOR
\RETURN $A_{[J]} := (A:K^{\infty})$
\end{algorithmic}
\end{Algorithm}

\begin{thm}\label{thmprimary}
Let $M=Q_1\cap Q_2\cap\ldots Q_m$ be an irredundant primary decomposition, $P_i=\sqrt{Q_i}$.
Let $P\in \{P_1,P_2,\ldots,P_m\}$ and $Q$ be a $P$-primary module such that $M\subseteq Q$. Then $Q$ is a primary component for $M$, i.e.
there exists $i$ such that $M=Q_1\cap\ldots Q_{i_1}\cap Q\cap Q_{i+1}\ldots\cap  Q_m$ if and
only if $Q\cap (M_{[P]}:P^\infty)=M_{[P]}$.
\end{thm}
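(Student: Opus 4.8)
The plan is to work locally at $P$ and transfer back, using the standard fact that $M_{[P]}$ (the localization in the module sense defined above) recovers exactly the intersection of those $Q_j$ with $P_j \subseteq P$, while $(M_{[P]}:P^\infty)$ strips off any $Q_j$ whose radical is $P$ itself. First I would record the two elementary observations that drive everything: (a) if $P = P_i$ for some $i$, then by Lemma~\ref{lemPrim}(1) applied in $M$ (or directly from the definition of localization at a prime), $M_{[P]} = \bigcap_{P_j \subseteq P} Q_j$, and (b) since $(M_{[P]}:P^\infty)$ removes precisely the $P$-primary components among those surviving in $M_{[P]}$, and the $P_j \subsetneq P$ components are untouched by $(-:P^\infty)$, we get $(M_{[P]}:P^\infty) = \bigcap_{P_j \subsetneq P} Q_j$ — a genuine intersection over primes strictly smaller than $P$, with the convention that an empty intersection is the whole ambient module. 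These identities are where the earlier lemma does the real work; I would state them as a preliminary step and cite Lemma~\ref{lemPrim}(1).

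Next, for the ``if'' direction, suppose $Q \cap (M_{[P]}:P^\infty) = M_{[P]}$. Reindex so that $P_i = P$ for some chosen index $i$; I would then claim $M = Q_1 \cap \cdots \cap Q_{i-1} \cap Q \cap Q_{i+1} \cap \cdots \cap Q_m$. Call the right-hand side $M'$. Clearly $M \subseteq M'$ since $M \subseteq Q_j$ for all $j$ and $M \subseteq Q$ by hypothesis. For the reverse inclusion, I would intersect $M'$ with $(M_{[P]}:P^\infty) = \bigcap_{P_j \subsetneq P} Q_j$: on one hand $M' \cap (M_{[P]}:P^\infty)$ contains all $Q_j$ with $P_j \ne P$ as well as $Q$, hence sits inside $Q \cap (M_{[P]}:P^\infty) = M_{[P]} = \bigcap_{P_j \subseteq P} Q_j$; on the other hand $M'$ already lies in $\bigcap_{P_j \not\subseteq P} Q_j$, so combining, $M' \subseteq \bigcap_{j} Q_j = M$. (One must check the index bookkeeping handles the case where several $P_j$ equal $P$ or where $P$ is embedded versus isolated; the decomposition being irredundant is used to know $Q$ genuinely replaces the $i$-th component and that no other component already contained $Q$.) Then one verifies that the new decomposition is again a primary decomposition with the same radicals, so $Q$ is indeed a primary component.

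For the ``only if'' direction, assume $Q$ is a primary component, so after reindexing $M = Q_1 \cap \cdots \cap Q_{i-1} \cap Q \cap Q_{i+1} \cap \cdots \cap Q_m$ with $\sqrt{Q} = P = P_i$. Localizing this identity at $P$ (in the module sense) kills every component with radical not contained in $P$, giving $M_{[P]} = Q \cap \bigcap_{P_j \subsetneq P} Q_j = Q \cap (M_{[P]}:P^\infty)$, using observation (b) again. That is exactly the asserted equation. The main obstacle I anticipate is not either implication per se but pinning down the bookkeeping in the irredundant decomposition — precisely which index $i$ is ``replaced,'' and making the two localization identities (a) and (b) rigorous for submodules of $R^s$ rather than for ideals, since the cited Lemma~\ref{lemPrim} is phrased for $(0:_M J^\infty)$ and one has to translate ``$M_{[P]}$'' and ``$(M_{[P]}:P^\infty)$'' into that language via the quotient $R^s/M$. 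Once those translations are in place, both directions are short intersection-chasing arguments.
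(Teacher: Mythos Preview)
Your argument is correct and follows the same strategy as the paper: both hinge on identifying $M_{[P]}$ with $\bigcap_{P_j\subseteq P}Q_j$ and $(M_{[P]}:P^\infty)$ with $\bigcap_{P_j\subsetneq P}Q_j$, and then finishing each direction by intersection-chasing. The only difference is organizational---you state these two identities up front and invoke Lemma~\ref{lemPrim}, whereas the paper derives the needed inclusions piecemeal by element arguments (handling the non-embedded case separately and showing $N\subseteq M_{[P]}:P^\infty$ by hand), but the substance is the same.
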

\begin{proof}Assume $Q=Q_i$ for some $i$. Then $P=P_i$.
\\Claim. If $P$ is not embedded then $M_{[P]}=Q$ and $M_{[P]}:P^\infty=R$.
\\Obviously $M_P\subseteq Q$. Let $x\in Q$ then $$M:x=(Q_1:x) \cap \ldots (Q_{i-1}:x)\cap (Q:x)\cap (Q_{i+1}:x)\ldots\cap (Q_m:x).$$
If $(M:x)\subseteq P=P_i$, this implies that $(Q_j:x)\subseteq P_i$ for some $j\neq i$.
This implies that $P_j\subseteq P_i$ which is a contradiction to the assumption that $P=P_i$ is not embedded.
This implies $I:x\varsubsetneq P$ and therefore $x\in M_{[P]}$. This implies $M_{[P]}=Q$. Now $P^m\subseteq Q$ for some $m$.
This implies that $M_{[P]}:P^\infty=R$ and proves the claim.
\\Claim.If $P_j\subsetneqq P$ then $M_{[P]}:P^\infty\subseteq M_{[P_j]}\subseteq Q_j$.
\\Let $x\in M_{[P]}:P^\infty$ i.e. $xq\in M_{[P]}$ for all $q\in P^m$ for all $m$.
There exists $\xi\notin P,\xi qx\in M$.
now choose $q\in P\setminus P_j$ then $\xi q\notin P_j$, this implies $x\in M_{P_j}$ and proves the claim.
Now let $x\in Q\cap(I_{[P]}:P^\infty)$ then $$M:x=(Q_1:x)\cap (Q_2:x)\cap \ldots \cap (Q_{i-1}:x)\cap (Q_{i+1}:x)\ldots$$
But $x\in M_{[P]}:P^\infty\subseteq Q_j$ for all $P_j\subsetneq P$. This implies $M:x=\cap_{P_j\subsetneq P}Q_j:x$.
 If $M:x\subset P$ then $Q_j:x\subset P$ for some $j$ with $P_j\subsetneq P$. But this is not possible.
 This implies $M:x\subsetneq P$, i.e. $x\in M_{[P]}$. Therefore $Q\cap (M_{[P]}:P^\infty)\subseteq M_{[P]}$.
 The other inclusion is obvious.
 We proved one direction of theorem.
\\ To prove the other direction assume $P=P_i$ and $Q\cap (M_{[P]}:P^\infty)=M_{[P]}$.
\\  Therefore $Q\cap (M_{[P]}:P^\infty)$=$Q_i\cap (M_{[P]}:P^\infty)$.
  Let $$N=Q_1\cap\ldots Q_{i_1}\cap Q_{i-1}\cap Q_{i+1}\ldots\cap Q_m.$$
  We have to prove that $M=N\cap Q$, i.e. $N\cap Q=N\cap Q_i$.
  \\Claim $N=M_{[P]}:P^\infty)\cap (\cap_{P_i\subsetneq P}Q_i)$.
\\Let $x\in N$, choose $m$ such that $P^m\subset Q_i$ then $xP^m\subseteq M\subseteq M_{[P]}$.This proves that $N\subseteq M_{[P]}:P^\infty$.
On the other hand we proved already that $M_{[P]}:P^\infty)\subseteq (\cap_{P_i\subsetneq P}Q_i)$.
 This implies that $M_{[P]}:P^\infty)\cap (\cap_{P_i\subsetneq P}Q_i)\subset N$
  but $N\subseteq M_{[P]}:P^\infty)$ implies $N=M_{[P]}:P^\infty)\cap (\cap_{P_i\subsetneq P}Q_i)$ and proves the claim.
   As $Q\cap (M_{[P]}:P^\infty)$=$Q_i\cap (M_{[P]}:P^\infty)$ so we obtain $N\cap Q=N\cap Q_i$.
\end{proof}
\begin{ex}
Let $I=\langle x^2,xy\rangle \subset \Q[x,y]$ be an ideal, then $P=\langle x\rangle,\,I_{[P]}=I$, $I_{[P]}:P^\infty=\gen x,\,Q = \langle x^2,y\rangle$ and $Q_1 = \langle x^2,xy,y^2\rangle$.
\end{ex}
Now we are ready to give an Algorithm for finding a primary component for a given associated prime.

\begin{Algorithm}
\textsc{PrimComp(A,P)}\label{Algprimcomp}
\begin{algorithmic}
\REQUIRE A module $A\subseteq B$ and a prime ideal $P$.
\ENSURE A primary component $Q$ of $A$ with associated prime $P$.
\vspace{0.1cm}
\STATE $T := PB$;
\STATE compute $A_{[P]}$ := \textsc{Local}($A,P$);
\STATE compute $A_{[P]}:P^{\infty}$;
\STATE compute \textsc{EquidimHull}($A+T$);
\IF{$A_{[P]}:P^{\infty} \subset A_{[P]}$}
\STATE $Q$ := \textsc{EquidimHull}$(A+T)$;
\ELSE
\STATE $T := PT$;
\ENDIF
\RETURN $Q$;
\end{algorithmic}

\end{Algorithm}

\begin{thm}\label{thmprimmon}
Let $Q$ be a submodule of $R[x]^s$ and $I$ be an ideal in $R$ then the natural map $Q_{[P]}:P^\infty /Q_{[P]}\rightarrow R/Q$, where $P$ is minimal
 associated prime of $I$, is a monomorphism if and only if $Q$ is $P$-primary.
\end{thm}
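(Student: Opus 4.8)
The plan is to reduce the statement to the single equivalence ``$Q$ is $P$-primary if and only if $Q=Q_{[P]}$'', after first identifying the map in the statement with the canonical surjection $R[x]^s/Q\twoheadrightarrow R[x]^s/Q_{[P]}$.

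I would begin by exploiting that $P$ is a minimal associated prime of $Q$. Fixing an irredundant primary decomposition of $Q$ in $R[x]^s$ and repeating the argument of the first Claim in the proof of Theorem~\ref{thmprimary} (with $Q$ now playing the role of the decomposed module), one obtains that $Q_{[P]}$ is the isolated primary component of $Q$ belonging to $P$ and that $Q_{[P]}:P^\infty=R[x]^s$; in particular $Q\subseteq Q_{[P]}$ and $Q_{[P]}$ is $P$-primary. (Alternatively, $Q_{[P]}$ being $P$-primary gives $\sqrt{\Ann(R[x]^s/Q_{[P]})}=P$, hence $P^n R[x]^s\subseteq Q_{[P]}$ for some $n$, so $(Q_{[P]}:P^\infty)=R[x]^s$; this can also be read off from Lemma~\ref{lemPrim}(1) applied to $R[x]^s/Q_{[P]}$, whose only associated prime is $P\supseteq P$, leaving an empty intersection.) Therefore
\[
(Q_{[P]}:P^\infty)/Q_{[P]}=R[x]^s/Q_{[P]},
\]
and the natural map of the statement is the canonical projection $\pi\colon R[x]^s/Q\to R[x]^s/Q_{[P]}$, whose kernel is $Q_{[P]}/Q$; hence $\pi$ is a monomorphism exactly when $Q_{[P]}=Q$.

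It then remains to prove that $Q_{[P]}=Q$ is equivalent to $Q$ being $P$-primary. If $Q=Q_{[P]}$ then $Q$ is a $P$-primary submodule by the previous paragraph. Conversely, assume $Q$ is $P$-primary, so $\Ass(R[x]^s/Q)=\{P\}$. Given $b\in R[x]^s$ with $(Q:b)\not\subseteq P$, suppose $b\notin Q$; then $R[x](b+Q)$ is a nonzero submodule of $R[x]^s/Q$, so $\Ass\bigl(R[x](b+Q)\bigr)=\{P\}$, whence $\Ann(b+Q)=(Q:b)$ has radical $P$ and so $(Q:b)\subseteq P$, a contradiction. Thus $Q_{[P]}=\{b:(Q:b)\not\subseteq P\}\subseteq Q$, and since $Q\subseteq Q_{[P]}$ always, $Q_{[P]}=Q$. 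This completes the argument.

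The step I expect to be the crux is the first one: pinning down $(Q_{[P]}:P^\infty)=R[x]^s$ and thereby the precise shape of the ``natural map''. This is exactly where minimality of $P$ cannot be dropped, since for a non-minimal prime $Q_{[P]}$ retains the primary components belonging to all primes contained in $P$, $(Q_{[P]}:P^\infty)$ becomes a proper submodule, and the clean description collapses. Everything else is a routine unwinding of the definition of a primary submodule and of the localization $A_{[J]}$, together with the quoted parts of Lemma~\ref{lemPrim}.
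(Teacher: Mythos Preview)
Your reading of the (admittedly garbled) statement differs from the paper's. You treat $Q$ as the module being decomposed, localize $Q$ itself at $P$, and reduce everything to the equivalence $Q=Q_{[P]}\Leftrightarrow Q$ is $P$-primary. That argument is internally correct, but to make the ``natural map'' go anywhere you had to silently reverse its direction, identifying it with the projection $R[x]^s/Q\to R[x]^s/Q_{[P]}$ rather than with a map into $R[x]^s/Q$ as printed; under your interpretation no natural map exists in the stated direction, and that is already a signal that the intended setup is different.

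The paper's proof works throughout with an ambient module $M$ (invisible in the printed statement) for which $P$ is a minimal associated prime, and with a $P$-primary submodule $Q\supseteq M$. The map in question is
\[
(M_{[P]}:P^\infty)/M_{[P]}\longrightarrow R[x]^s/Q,\qquad x+M_{[P]}\longmapsto x+Q,
\]
which is well-defined since $M_{[P]}\subseteq Q$, and whose kernel is $\bigl(Q\cap(M_{[P]}:P^\infty)\bigr)/M_{[P]}$. What the paper proves is that this kernel vanishes precisely when $Q$ is an actual $P$-primary component of $M$, i.e.\ when $Q\cap(M_{[P]}:P^\infty)=M_{[P]}$; this is exactly the criterion of Theorem~\ref{thmprimary}, and it is the test implemented in Algorithm~\ref{Algprimcomp} to certify that a candidate $Q=\textsc{EquidimHull}(A+T)$ really is a component. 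In that argument minimality of $P$ is used to rule out inclusions $P_j\subsetneq P$ among the associated primes of $M$, not (as in your version) to force $(Q_{[P]}:P^\infty)=R[x]^s$. So your proof establishes a true but different fact; the paper's intended content is the component-testing criterion with the hidden module $M$.
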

\begin{proof} Suppose $Q$ is $P$ primary. First we will show that $M_{[P]}:P^\infty \cap Q\subset M_{[P]}$.
Note that $M_{[P]}=\{b\in R^S\mid dim(P+M:b)<dim\, P\} = \{b\in R^S\mid M:b\nsubseteq P\}\subseteq P$
because if $\xi \in M:b$ i.e. $\xi b\in M$ and $\xi \notin P $ implies $b\in P$.
So $M_{[P]}=\{b\in P\mid M:b\nsubseteq P\}\subseteq Q$.\\Choose $\xi \notin P$ but $b\xi \in M$ which implies $b\in Q$.
To prove that $M_{[P]}:P^\infty \cap Q=M_{[P]}$, it is enough to prove that for any $p\in P$, $M_{[p]}:p \cap Q=M_{[p]}$.
\\Let $f\in M_{[P]}:P^\infty \cap Q$. There exists $\xi$ such that $p.f.\xi\in M, \,\,\xi\notin P,\,\, f\in Q,\,\,p\in P$. To show $f\in M_{[P]}$ we have to show that $M:f\nsubseteq P$.\\
$M = Q_1\cap Q_2\cap \ldots \cap Q_s$ so $M:f = (Q_1:f)\cap (Q_2:f) \cap \ldots \cap (Q_s : f)$. Assume $M:f\subseteq P$, which implies there exists
$i$ such that $Q_i\subsetneq Q_i:f\subseteq P$. This implies $P_i\subsetneq P$, which is contrary to the fact that $P$ is minimal.\\
Conversely, we suppose that $M_{[P]}:P^\infty /M_{[P]}\rightarrow R[x]^s/\overline{Q}$ is injective and $\overline{Q}$ is $P$-primary module containing $M$,
i.e $\overline{Q}=M_{[P]}=\{b\mid M:b\nsubseteq P\}\subseteq Q$ (we know $\overline{Q}$ is $P$-primary and $P$ is associated prime of $M$ and $M$ is equidimensional).
$M= Q_1\cap Q_2\cap\ldots Q_s$, this implies that $N\subseteq\overline{Q}\subseteq Q$, which in turn implies that $M=\overline{Q}\cap Q_2\ldots Q_s$, so 
$Q=\overline{Q}$.
\end{proof}
\begin{thm}\label{thmprimcomp}
Let $N\subseteq R[x]^s$ be a module, $P\in Ass(M)$. Then $N+P^mR^S$ is a $P$-primary component of $N$ for some $m$.
\end{thm}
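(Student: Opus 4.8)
The plan is to reduce the statement, via Theorem~\ref{thmprimary}, to a single assertion: that for a suitable $m$ the submodule $N+P^mR^s$ is $P$-primary. Set $\bar M:=R^s/N$, so that $P\in\Ass\bar M$ and $N+P^mR^s$ corresponds to $P^m\bar M\subseteq\bar M$. Fix once and for all an irredundant primary decomposition $N=Q_0\cap Q_1\cap\cdots\cap Q_r$ with $\sqrt{Q_0}=P$, and choose $k$ with $P^kR^s\subseteq Q_0$. First I would record the bookkeeping identities $N_{[P]}=\bigcap_{P_i\subseteq P}Q_i$ and $N_{[P]}:P^\infty=\bigcap_{P_i\subsetneq P}Q_i$: the components with $P_i\not\subseteq P$ drop out after inverting $R\setminus P$, and $Q_i:P^\infty=Q_i$ whenever $P_i\subsetneq P$ because then some power of $P$ meets $R\setminus P_i$, while $Q_0:P^\infty=R^s$.

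Next I would dispose of the half of the ``component'' condition that does not depend on a delicate choice of $m$: for every $m\ge k$ one has $(N+P^mR^s)\cap(N_{[P]}:P^\infty)\subseteq N_{[P]}$, because if $y=n+z$ with $n\in N$ and $z\in P^mR^s\subseteq Q_0$ happens to lie in $\bigcap_{P_i\subsetneq P}Q_i$, then also $y=n+z\in Q_0$, whence $y\in Q_0\cap\bigcap_{P_i\subsetneq P}Q_i=N_{[P]}$; the reverse inclusion is trivial once $N_{[P]}\subseteq N+P^mR^s$ is known. So, by Theorem~\ref{thmprimary}, it suffices to show that for some $m\ge k$ the module $N+P^mR^s$ is $P$-primary and contains $N_{[P]}$.

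The hard part is the $P$-primariness, i.e. showing $\Ass_R(\bar M/P^m\bar M)=\{P\}$: every member of $\Ass_R(\bar M/P^m\bar M)$ contains $P$ since $P^m$ annihilates it, and the issue is to exclude strictly larger primes. The plan here is to use that $\bar M/P^m\bar M=R^s/(N+P^mR^s)$ is annihilated by $P^m$, hence supported on $V(P)$; viewing it as a nonzero module (nonzero since $P\in\Supp\bar M$, so $P^m\bar M\subsetneq\bar M$ by Nakayama) over the Artinian local ring $R_P/\mathfrak m_P^m$, it has $P$ as its unique associated prime. Granting this, $\bar M/P^m\bar M$ is $P$-coprimary, so its set of zero-divisors is exactly $P$; therefore the image in $\bar M/P^m\bar M$ of the submodule $\{x\in\bar M:sx=0\text{ for some }s\notin P\}=\ker(\bar M\to\bar M_P)=N_{[P]}/N$ must vanish, i.e. $N_{[P]}\subseteq N+P^mR^s$. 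Combined with the previous paragraph this yields $(N+P^mR^s)\cap(N_{[P]}:P^\infty)=N_{[P]}$, so Theorem~\ref{thmprimary} applies and $N+P^mR^s$ is a primary component of $N$ belonging to $P$; any $m\ge k$ works, the least admissible one being $k$, which is exactly the value the loop in Algorithm~\ref{Algprimcomp} converges to.

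The step I expect to really need care is precisely the reduction of $\Ass_R(\bar M/P^m\bar M)$ to $\{P\}$: since a power of a prime need not be primary, the argument above genuinely relies on $P^m\bar M$ having support concentrated at $P$ (which is what one has in the localization setting in which this theorem is used). The remaining ingredients are only bookkeeping with the fixed primary decomposition together with the criterion of Theorem~\ref{thmprimary}.
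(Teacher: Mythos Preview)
Your argument breaks precisely where you flag it. The module $\bar M/P^m\bar M=R^s/(N+P^mR^s)$ is an $R/P^m$-module, not a module over the Artinian local ring $R_P/\mathfrak m_P^m$; you cannot pass to $R_P$ without changing the module, so the conclusion $\Ass_R(\bar M/P^m\bar M)=\{P\}$ does not follow. In fact it is false in general. Take $R=k[x,y,z]$, $s=1$, $N=(xy,xz)=(x)\cap(y,z)$ and $P=(x)$. Here $Q_0=(x)$, so your $k=1$; but for $m=2$ one gets $N+P^2=(x^2,xy,xz)$, and the class of $x$ in $R/(N+P^2)$ is annihilated by the embedded prime $(x,y,z)$. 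Thus $N+P^mR^s$ is not $P$-primary and your assertion ``any $m\ge k$ works'' is wrong. The parenthetical about ``support concentrated at $P$'' does not rescue this: one always has $\Supp\bigl(R^s/(N+P^mR^s)\bigr)=V(P)$, yet embedded primes over $P$ may still appear.

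The paper does not attempt to show that $N+P^mR^s$ is itself $P$-primary. Instead it chooses $m$ large enough that $P^mR^s\subseteq Q_j$ for every $j$ with $P\subseteq P_j$, records the sandwich $N\subseteq N+P^mR^s\subseteq\bigcap_{P\subseteq P_j}Q_j$, and then applies the equidimensional hull. Since $P$ is the unique minimal prime in $\Supp\bigl(R^s/(N+P^mR^s)\bigr)$, the hull of $N+P^mR^s$ is $P$-primary by construction; localizing the sandwich at $P$ shows this hull is contained in $Q$, and any $P$-primary module between $N$ and $Q$ is again a $P$-primary component of $N$. This is exactly what Algorithm~\ref{Algprimcomp} computes: it tests $\textsc{EquidimHull}(A+P^mB)$, not $A+P^mB$, against the criterion of Theorem~\ref{thmprimary}. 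Your route via Theorem~\ref{thmprimary} is sound for the bookkeeping half, but the attempt to dispense with the hull by proving primariness of $N+P^mR^s$ directly cannot succeed; the hull is doing essential work.
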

\begin{proof}
Let $N= Q_1\cap Q_2\ldots\cap Q_s\cap Q_{s+1}\cap \ldots \cap Q_m$, $P=\sqrt Q, \,Q=Q_i$. Choose $m$ such that $P^m\in Q$.
Now $N + P^m R^S\subseteq \cap_{P\subseteq \sqrt{Q_j}}Q_j$, $f\in N+P^mR^S$, so $f$ is of the form $f = f_1 + f_2$, this implies $f_1\in Q_j$ for any $j$,
$f_2\in Q_j,\,\, P\in\sqrt{Q_j}$, so $f\in \cap_{P\subseteq \sqrt{Q_j}}Q_j$, (as equi($\cap_{P\subseteq \sqrt{Q_j}}Q_j)=Q$) so equi$(N+P^m)=Q$.
\end{proof}
Here is the Algorithm to find the primary decomposition of a given module.

\begin{Algorithm}
\textsc{PrimdecmEHV(M)}\label{primdecehv}
\begin{algorithmic}
\REQUIRE A module $M\subset R[x]^n$, and $x={x_1,x_2,\ldots,x_n}$.
\ENSURE A list $(Q_i,P_i)$, where $Q_i$ is primary component of $A$, with associated prime $P_i$.
\vspace{0.1cm}
\STATE $N :=$ \textsc{EquidimHull}$(M)$;
\STATE compute $B:=\{P_1, \ldots, P_r\}$, the set of minimal associated primes of $\Ann(N)$;
\FOR{$i = 1, \ldots, r$}
\STATE compute $Q_i =$ \textsc{PrimComp}$(N,P_i)$;
\ENDFOR
\IF{$M$ has embedded primes}
\WHILE{$(f > \codim(M))$}
\STATE $H:={\Ext_R}^f (M)$;
\STATE $I_f := \Ann(H)$
\STATE $c := \codim(I_f)$;
\IF{$(c=f)$}
\STATE $K := \minAss($ \textsc{EquidimMaxEHV}($I_f$));
\STATE compute for each prime ideal $P_i$ in $K$;
\STATE $Q_i =$ \textsc{PrimComp}$(M,P)$;
\STATE $f = f-1$;
\ENDIF
\ENDWHILE
\ENDIF
\RETURN $(P_i,Q_i)$;
\end{algorithmic}
\end{Algorithm}

 In programming, we computed associated primes using the method of (cf. [GTZ]) instead of eisenbud et al. It would be interesting to compare the results with computation of associated primes using other techniques.

\section{Procedures}

      \texttt{proc primEHV(module M)}
 \\  \texttt{"USAGE:  primEHV(id); id= ideal/module,}
 \\     \texttt{RETURN: a list K of primary ideals and their associated primes:}
  \\     \texttt{ @*   K[i][1]   the i-th primary component of M,}
  \\   \texttt{@*   K[i][2]   the i-th prime component of M.}
  \\ \texttt{EXAMPLE: example primEHV; shows an example}
\\ \texttt{"}
\\ $\{$
   \\  \indent \quad   \texttt{list Z,L,K,W;}
   \\  \indent \quad   \texttt{module H;}
   \\  \indent \quad   \texttt{ideal If;}
   \\  \indent \quad   \texttt{int i,e,n,c;}
   \\  \indent \quad   \texttt{n=nvars(basering);}
   \\  \indent \quad   \texttt{e=dim(std(M));}
   \\  \indent \quad   \texttt{int f=n;}
  \\  \indent \quad   \texttt{module M1=canonMap(M)[1];}
 \\    \indent \quad   \texttt{module N1=freemodule(nrows(M));}
   \\  \indent \quad   \texttt{module N=N1;}
   \\  \indent \quad   \texttt{L=minAssGTZ(Ann(M1));}
      \\  \indent \quad   \texttt{int l = size(L);}
   \\  \indent \quad   \texttt{for( i=1; i<=l; i++)}
   \\  \indent \quad  $\{$
      \\  \indent \quad \quad   \texttt{K[i] = list();}
     \\  \indent \quad \quad   \texttt{ K[i][2] =std(L[i]);}
     \\  \indent \quad  \quad  \texttt{ K[i][1] = com(M1,N1,std(L[i]),L);}
   \\  \indent \quad  $\}$
   \\  \indent \quad   \texttt{for(i=1;i<=size(K);i++)}
   \\  \indent \quad  $\{$
   \\  \indent \quad   \quad   \texttt{   N=intersect(N,K[i][1]);}
    \\  \indent \quad  $\}$
   \\  \indent \quad   \texttt{if(reduce(N,std(M))!=0)  }       //if M has embedded primes then
  \\  \indent \quad $\{$
    \\  \indent \quad \quad   \texttt{ while(f>n-e)}
     \\  \indent \quad \quad $\{$
     \indent \quad   \quad \quad \texttt{H=ExtR(f,M);}
    \\  \indent \quad \quad \quad   \texttt{   If=quotient(H,freemodule(nrows(H)));}      //If is ann(H)
    \\  \indent \quad \quad \quad   \texttt{   c=n-dim(std(If));}
       \\  \indent \quad  \quad \quad \texttt{if(c==f)}
       \\  \indent \quad  \quad \quad $\{$
       \\  \indent \quad  \quad \quad \quad  \texttt{  Z=minAssGTZ(equiMaxEHV(If));}
       \\  \indent \quad  \quad \quad \quad  \texttt{  for( i=1; i<=size(Z); i++)}
         \\  \indent \quad \quad \quad \quad  $\{$
       \\  \indent \quad   \quad \quad \quad \quad \texttt{    W[i] = list();}
       \\  \indent \quad   \quad \quad \quad \quad  \texttt{    W[i][2] =std(Z[i]);}
       \\  \indent \quad \quad \quad \quad \quad   \texttt{    W[i][1] = com(M,N1,std(Z[i]),Z);}
       \\  \indent \quad \quad \quad \quad \quad   \texttt{    N=intersect(N,W[i][1]);}
          \\  \indent  \quad \quad \quad\quad  $\}$
       \\  \indent \quad  \quad \quad \quad  \texttt{  K=K+W;}
        \\  \indent  \quad\quad \quad  $\}$
       \\  \indent \quad  \quad \quad  \texttt{f--;}
    \\  \indent \quad   \quad$\}$
  \\  \indent \quad  $\}$
 \\  \indent \quad   \texttt{return(K);}
  \\  $\}$

   \texttt{example}
 \\ $\{$  \indent \quad   \texttt{   "EXAMPLE:"; echo = 2;}
 \\  \indent \quad   \texttt{     ring s=0,(x,y,z),dp;}
 \\  \indent \quad   \texttt{      ideal i=x2y,xz2,y2z;}
 \\  \indent \quad   \texttt{     primEHV(i);}
 \\  \indent \quad   \texttt{   ring T = 0,(x,y,z),dp;}
 \\  \indent \quad   \texttt{    module M=[xy,0,yz],[0,xz,z2];}
 \\  \indent \quad   \texttt{     primEHV(M);}
\\$\}$

    \texttt{  proc canonMap(list l)}
 \\   \texttt{  "USAGE:  canonMap(id); id= ideal/module,}
 \\ \texttt{  RETURN:  a list L, the kernel in two different representations and}
\\    \texttt{  @*       the cokernel of the canonical map}
\\  \indent \quad   \texttt{  @*       M ---> Ext\^c\_R(Ext\^c\_R(M,R),R) given by presentations}
 \\    \texttt{  @*       Here M is the R-module (R=basering) given by the }
 \\   \texttt{  @*      presentation defined by id, i.e. M=R/id resp. M=R\^n/id}
 \\   \texttt{  @*       c is the codimension of M  }
 \\   \texttt{  @*       L[1] is the preimage of the kernel in R resp. R\^n}
 \\     \texttt{  @*       L[2] is a presentation of the kernel}
 \\   \texttt{  @*       L[3] is a presentation of the cokernel}
  \\ \texttt{EXAMPLE: example canonMap; shows an example}
  \\ \texttt{"}\\
$\{$
 \\  \indent \quad   \texttt{  module M=hash[1];}
 \\  \indent \quad   \texttt{  int c=nvars(basering)-dim(std(M));}
 \\  \indent \quad   \texttt{  if(c==0)}
   \\  \indent \quad  $\{$
 \\  \indent \quad \quad   \texttt{     module K=syz(transpose(M));}
 \\  \indent \quad \quad   \texttt{     module Ke=syz(transpose(K));}
 \\  \indent \quad  \quad  \texttt{     module Co=modulo(syz(transpose(syz(K))),transpose(K));}
    \\  \indent \quad  $\}$
 \\  \indent \quad   \texttt{  else}
   \\  \indent \quad  $\{$
 \\  \indent \quad  \quad  \texttt{     int i;}
 \\  \indent \quad  \quad  \texttt{     resolution F=mres(M,c+1);}
 \\  \indent \quad  \quad  \texttt{     module K=syz(transpose(F[c+1]));}
 \\  \indent \quad  \quad  \texttt{     K=simplify(reduce(K,std(transpose(F[c]))),2);}
 \\  \indent \quad  \quad  \texttt{     module A=modulo(K,transpose(F[c]));}
 \\  \indent \quad  \quad  \texttt{     resolution G=nres(A,c+1);}
 \\  \indent \quad   \quad \texttt{     for(i=1;i<=c;i++)}
      \\  \indent \quad \quad  $\{$
 \\  \indent \quad  \quad \quad  \texttt{        K=lift(transpose(F[c-i+1]),K*G[i]);}
       \\  \indent \quad\quad  $\}$
 \\  \indent \quad  \quad  \texttt{     module Ke=modulo(transpose(K),transpose(G[c]));}
 \\  \indent \quad  \quad  \texttt{     module Co=modulo(syz(transpose(G[c+1])),transpose(K)+transpose(G[c]));}
    \\  \indent \quad  $\}$
 \\  \indent \quad   \texttt{  return(list(Ke,Co));}
 \\   $\}$

   \texttt{ example}
\\$\{$   \indent \quad   \texttt{  "EXAMPLE:"; echo = 2;}
    \\  \indent \quad   \texttt{  ring s=0,(x,y),dp;}
   \\  \indent \quad   \texttt{   ideal i = x,y;}
   \\  \indent \quad   \texttt{   canonMap(i);}
  \\  \indent \quad   \texttt{    ring R = 0,(x,y,z,w),dp;}
  \\  \indent \quad   \texttt{    ideal I1 = x,y;}
  \\  \indent \quad   \texttt{    ideal I2 = z,w;}
  \\  \indent \quad   \texttt{    ideal I = intersect(I1,I2);}
  \\  \indent \quad   \texttt{    canonMap(I);}
  \\  \indent \quad   \texttt{    module M = syz(I);}
  \\  \indent \quad   \texttt{    canonMap(M);}
  \\  \indent \quad   \texttt{    ring S = 0,(x,y,z,t),Wp(3,4,5,1);}
  \\  \indent \quad   \texttt{    ideal I = x-t3,y-t4,z-t5;}
  \\  \indent \quad   \texttt{    ideal J = eliminate(I,t);}
  \\  \indent \quad   \texttt{    ring T = 0,(x,y,z),Wp(3,4,5);}
  \\  \indent \quad   \texttt{ideal p = imap(S,J);}
  \\  \indent \quad   \texttt{ideal p2 = p\^2;}
  \\  \indent \quad   \texttt{canonMap(p2);}
\\$\}$

     \texttt{  proc com(module A, module B, ideal P, list L)}
  \\    \texttt{  "USAGE:com(id1,id2,P,L);id1=ideal/module,id2=ideal/module ,P prime}
\\ \texttt{  @* ideal in the list L of prime ideals}
 \\     \texttt{  RETURN: returns a primary component of the module A}
 \\     \texttt{             @* defined by id1 associated }
   \\   \texttt{ @*to prime ideal P defined by id2  }
  \\ \texttt{EXAMPLE: example com; shows an example}
  \\ \texttt{"}\\

 $\{$
 \\  \indent \quad   \texttt{ module T = P*B;}
 \\  \indent \quad   \texttt{ module Q;}
 \\  \indent \quad   \texttt{ module AP = groebner(locm(A,B,P,L));}
\\  \indent \quad  $\{$
  //...and compute the saturation of the localization w.r.t. P.
 \\  \indent \quad \quad   \texttt{ module AP2 = sat(AP,P)[1];}

  //As long as we have not found a primary component...
 \\  \indent \quad \quad   \texttt{ int isPrimaryComponent = 0;}
 \\  \indent \quad \quad   \texttt{ while(isPrimaryComponent!=1)}
   \\  \indent \quad  \quad $\{$
      //...compute the equidimensional part Q of A+P\^n...
       \\  \indent \quad \quad \quad   \texttt{Q = canonMap(A+T)[1];}
     //and check if it is a primary component for P.
 \\  \indent \quad   \quad \quad \texttt{if(isSub(intersect(AP2,Q),AP)==1)}
      \\  \indent  \quad \quad\quad  $\{$
 \\  \indent \quad   \quad \quad \quad \texttt{isPrimaryComponent = 1;}
       \\  \indent \quad \quad \quad  $\}$
 \\  \indent \quad    \quad \quad\texttt{else}
      \\  \indent  \quad \quad\quad  $\{$
 \\  \indent \quad \quad \quad \quad   \texttt{T = P*T;}
       \\  \indent \quad \quad \quad  $\}$
    \\  \indent  \quad\quad  $\}$
 \\  \indent \quad   \texttt{  return(Q);}
  \\$\}$

 \texttt{  example}
\\$\{$   \indent \quad   \texttt{  "EXAMPLE:"; echo = 2;}
   \\  \indent \quad   \texttt{   ring r=0,(x,y),dp;}
 \\  \indent \quad   \texttt{  module N=x*gen(1)+ y*gen(2),}
 \\  \indent \quad   \texttt{  x*gen(1)-x2*gen(2);}
 \\  \indent \quad   \texttt{  list L=minAssGTZ(Ann(N));}
 \\  \indent \quad   \texttt{  ideal P=x;}
 \\  \indent \quad   \texttt{  module A=freemodule(nrows(N));}
 \\  \indent \quad   \texttt{  com(N,A,P,L);}
\\$\}$

    \texttt{  proc locm(module A,module B,ideal J,list L)}
 \\   \texttt{  "USAGE: locm(id1,id2,id,list); id1= ideal/module,}
 \\     \texttt{  @*      id2=ideal/module,id=prime ideal in a list L.}
 \\    \texttt{  RETURN:  The localization of a module A denoted by id1}
\\    \texttt{  @*      at the prime ideal J denoted by A\_[J] defined as A\_[J]=(A:K\^(infinity))}
 \\   \texttt{  @*      K is intersection of (I\_e:(I\_e)\_J) over all e, where I\_e is}
 \\   \texttt{  @*      intersection of all associated primes of B/A }
 \\     \texttt{  @*      having codimension e, where A is subset of B,are}
 \\    \texttt{  @*      modules over freemodule S.}
   \\ \texttt{EXAMPLE: example canonMap; shows an example}
  \\ \texttt{"}\\
$\{$
 \\  \indent \quad   \texttt{ ideal h=quotient(A,B);}
 \\  \indent \quad   \texttt{ int n=nvars(basering);}
 \\  \indent \quad   \texttt{ list LL=L;}
 \\  \indent \quad   \texttt{ ideal I,G,P,Q;}
  //assume J is in L
 \\  \indent \quad   \texttt{ int i,c;}
 \\  \indent \quad   \texttt{ list H;}
 \\  \indent \quad   \texttt{ while(size(L)>0)}
\\  \indent \quad  $\{$
 \\  \indent  \quad\quad   \texttt{ I=L[1];}
 \\  \indent  \quad\quad   \texttt{L=delete(L,1);}
 \\  \indent  \quad\quad   \texttt{ c=dim(std(I));}
 \\  \indent  \quad\quad   \texttt{ i=1;}
 \\  \indent  \quad\quad   \texttt{ while(i<=size(L))}
    \\  \indent \quad \quad $\{$
 \\  \indent \quad \quad \quad   \texttt{ if(dim(std(L[i]))==c)}
 \\  \indent \quad    \quad \quad$\{$
 \\  \indent \quad    \quad \quad \quad\texttt{I=intersect(I,L[i]);}
 \\  \indent \quad   \quad \quad \quad \texttt{L=delete(L,i);}
 \\  \indent \quad   \quad \quad \quad \texttt{ i--;}
       \\  \indent  \quad \quad\quad  $\}$
 \\  \indent \quad  \quad \quad  \texttt{i++;	}
     \\  \indent \quad \quad  $\}$
 \\  \indent \quad   \quad \texttt{ H[size(H)+1]=I;}
   \\  \indent \quad  $\}$
 \\  \indent \quad   \texttt{ ideal K=ideal(1);}
 \\  \indent \quad   \texttt{ for(i=1; i<=size(H);i++)}
  \\  \indent \quad  $\{$
 \\  \indent \quad   \quad \texttt{ G=localize(H[i],J,LL);}
 \\  \indent \quad   \quad \texttt{ P=quotient(H[i],G);}
 \\  \indent \quad   \quad \texttt{ K=intersect(K,P);}
   \\  \indent \quad  $\}$
 \\  \indent \quad   \texttt{ return(sat(A,K)[1]);}
 \\  $\}$
 \\    \texttt{  example}
\\$\{$   \indent \quad   \texttt{   "EXAMPLE:"; echo = 2;}
    \\  \indent \quad   \texttt{  ring r=0,(x,y),dp;}
    \\  \indent \quad   \texttt{  module M=[x2,xy2],[xy,y2];}
    \\  \indent \quad   \texttt{  module A=freemodule(nrows(M));}
    \\  \indent \quad   \texttt{  list L=minAssGTZ(Ann(M));}
    \\  \indent \quad   \texttt{  ideal J=L[2]=y-1;}
    \\  \indent \quad   \texttt{  locm(M,A,J,L);}
\\$\}$
 \\    \texttt{  proc isSub(module I,module J)}
 \\     \texttt{  "USAGE:  isSub(mod1,mod2); mod1= ideal/module,}
 \\   \texttt{  @*                  mod2=ideal/module}
 \\     \texttt{  RETURN:  1 if I is a submodule of J else 0.}
    \\ \texttt{"}
\\$\{$
 \\  \indent \quad   \texttt{  int s = size(I);}
 \\  \indent \quad   \texttt{  for(int i=1; i<=s; i++)}
   \\  \indent \quad  $\{$
 \\  \indent \quad \quad   \texttt{     if(reduce(I[i],std(J))!=0)}
      \\  \indent \quad \quad  $\{$
 \\  \indent \quad   \quad \quad \texttt{         return(0);}
       \\  \indent \quad \quad  $\}$
    \\  \indent \quad  $\}$
  \\  \indent \quad   \texttt{ return(1);}
\\ $\}$

\end{document}